\documentclass{article}
\usepackage{amssymb,amsthm,amsmath,enumerate}
\usepackage{graphics}

\tolerance=2500

\theoremstyle{plain}

\newtheorem{theorem}{Theorem}

\newcommand*{\C}{\ensuremath{\mathbb{C}}}
\newcommand*{\N}{\ensuremath{\mathbb{N}}}

\begin{document}

\date{}

\author{{\.Zywilla Fechner and L\'aszl\'o Sz\'ekelyhidi}\\ 
{\small\it Institute of Mathematical Finance, Ulm University}, \\{\small\it Helmholtzstrasse 18, 89081 Ulm, Germany,}\\
   {\small\rm e-mail: \tt zfechner@gmail.com}\\
   {\small\it Institute of Mathematics, University of Debrecen,}\\
   {\small\rm e-mail: \tt lszekelyhidi@gmail.com} }

\title{Sine and cosine equations on commutative hypergroups
   \footnotetext{The research was partly supported by the
   Hungarian National Foundation for Scientific Research (OTKA),
   Grant No. K111651.}\footnotetext{Keywords and phrases:
  hypergroup, sine and cosine equation}\footnotetext{AMS (2000)
   Subject Classification: 20N20, 43A62, 39B99}}

\maketitle

\begin{abstract} 
In this paper we describe the solutions of the functional equations expressing the addition theorems for sine and cosine on commutative hypergroups. 
\end{abstract}

\vskip1cm

\section{Introduction}

\hskip.5cm In this paper $\C$ denotes  the set of complex numbers. By a {\it hypergroup} we mean a locally compact hypergroup. The identity element of the hypergroup $K$ will be denoted by $o$.
\vskip.3cm

For basics about hypergroups see the monograph \cite{BlH95}. The detailed study of functional equations on hypergroups started with the papers \cite{MR2042564, MR2107959, MR2161803}. A comprehensive monograph on the subject is \cite{Sze12}. Further results and references on this topic can be found in \cite{MR2209674, MR2282869, MR2272893, MR2309586, MR2431934, MR2805073, SzeVaj12c}. Concerning other similar trigonometric-type functional equations the reader should consult with \cite{MR0219936, MR0239308, MR0291872, MR1741472, MR3184407}.
\vskip.3cm

In this paper we study the {\it sine-cosine functional equation}
\begin{equation}\label{sine}
f(x*y)=f(x) g(y)+f(y) g(x)
\end{equation}
and the {\it cosine-sine functional equation}
\begin{equation}\label{cosine}
g(x*y)=g(x) g(y)-f(x) f(y)
\end{equation}
on an arbitrary commutative hypergroup $K$. In case of both equations we shall always assume that $f,g:K\to\C$ are non-identically zero continuous functions. 
\vskip.3cm

We note that these functional equations are fundamental in the theory of functional equations. In particular, if in \eqref{sine} we have $g=1$, then $f$ is an {\it additive function}, that is
\begin{equation}\label{add}
f(x*y)=f(x) +f(y),
\end{equation}
and if in \eqref{cosine} we have $h=0$, then $g$ is an {\it exponential}, that is
\begin{equation}\label{exp}
g(x*y)=g(x) g(y).
\end{equation}

We shall see that in the case of the sine-cosine equation \eqref{sine} the situation is very sophisticated if $g=m$ is an exponential. In the group case exponentials are never zero, hence we can divide by $m$ and we deduce immediately that $f$ has the form $f=a\cdot m$, where $a$ is additive. Obviously, this is a solution of \eqref{sine} on any commutative hypergroup, but there, in general, we cannot divide by $m$ as exponentials on hypergroups may take zero values. It turns out that on a general commutative hypergroup the solutions $f$ of \eqref{sine} with an exponential $g=m$ produce a new basic function class, which cannot be described directly using exponentials and additive functions. This is a new feature provided by the delicate structure of hypergroups and it seems reasonable to introduce the following definition:
if $K$ is a commutative hypergroup and $m$ is an exponential on $K$, then the function $f:K\to\C$ will be called an {\it $m$-sine function}, if it satisfies
$$
f(x*y)=f(x) m(y)+f(y) m(x)
$$ 
for each $x,y$ in $K$. We call $f$ a {\it sine function}, if it is an $m$-sine function for some exponential $m$. Obviously, we have $f(o)=0$ for every sine function $f$. Additive functions on $K$ are exactly the $1$-sine functions. If $K=G$ is an Abelian group, then for a given exponential $m$ the $m$-sine functions are exactly the functions of the form $f=a\cdot m$, where $a$ is additive. But this is not the case on hypergroups. For instance, let $K$ be the polynomial hypergroup generated by the sequence of polynomials $(P_n)_{n\in \N}$ (see \cite{BlH95}). It is known that all exponential functions on $K$ have the form $n\mapsto P_n(\lambda)$ with some complex number $\lambda$, and all additive functions have the form $n\mapsto c P_n'(0)$ with some complex number $c$ (see \cite{Sze12}). Further, if we define
$$
f(n)=P_n'(\lambda),\hskip.5cm m(n)=P_n(\lambda)
$$
for each $n$ in $\N$ with some complex number $\lambda$, then
it is easy to check that the following equation holds for each $m,n$ in $\N$:
$$
f(n*k)=f(n) m(k)+f(k) m(n),
$$
that is, $f$ is an $m$-sine function. On the other hand, it is easy to see that it does not have the form $n\mapsto c\, P_n'(0)\, P_n(\lambda)$ for any complex $c$.  
\vskip.3cm

In the forthcoming paragraphs we shall describe the solutions of the sine-cosine and the cosine-sine functional equations \eqref{sine} and \eqref{cosine} on arbitrary commutative hypergroups in terms of exponentials and sine functions.

\section{Sine-cosine functional equations on \\hypergroups}

\hskip.5cm In this section we describe the nonzero solutions of the sine-cosine functional equation \eqref{sine} on commutative hypergroups.

\begin{theorem}\label{1}
Let $K$ be a commutative hypergroup, and let $f,g:K\to\C$ be non-identically zero continuous functions satisfying \eqref{sine} for each $x,y$ in $K$. Then there exists a complex number $c\ne 0$ and there are continuous exponentials $M,N:K\to\C$ such that we have one the following possibilities:
\begin{enumerate}[i)]
\item 
$g(x)=M(x)$, and $f$ is an $M$-sine function.
\item 
\begin{equation}\label{sine1}
f(x)=\frac{1}{2c}\, M(x),\hskip1cm g(x)=\frac{1}{2}\, M(x)
\end{equation}
for each $x$ in $K$.
\item 
\begin{equation}\label{sine2}
f(x)=\frac{1}{2c}\, [M(x)-N(x)],\hskip1cm g(x)=\frac{1}{2}\, [M(x)+N(x)]
\end{equation}
for each $x$ in $K$. 
 \end{enumerate}
Conversely, the functions $f,g$ given above are continuous solutions of \eqref{sine} for every nonzero complex number $c$ and continuous complex exponentials $M,N$.
\end{theorem}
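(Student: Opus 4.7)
The plan is to extract information from \eqref{sine} at the identity $o$, then exploit associativity of convolution to produce an identity for the ``defect'' $h(x,y):=g(x*y)-g(x)g(y)$, and finally split into the cases where $h\equiv 0$ or not.

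First, $x=y=o$ in \eqref{sine} gives $f(o)\bigl(1-2g(o)\bigr)=0$, and $y=o$ gives $f(x)\bigl(1-g(o)\bigr)=f(o)g(x)$. Since $f,g\not\equiv 0$, these force either \textbf{(A)} $f(o)=0$ and $g(o)=1$, or \textbf{(B)} $f(o)\ne 0$ and $g(o)=1/2$. In case \textbf{(B)} the relation $y=o$ gives $f=2f(o)\,g$; plugging this back into \eqref{sine} yields $g(x*y)=2g(x)g(y)$, so $M:=2g$ is a continuous exponential and case \textit{ii)} follows with $c:=1/(2f(o))$.

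The main work is case \textbf{(A)}. I would expand $f((x*y)*z)=f(x*(y*z))$ by applying \eqref{sine} twice (evaluating $f$ on a convolution via integration against $\delta_x*\delta_y$ or $\delta_y*\delta_z$); after cancellation this yields
\[
f(z)\,h(x,y)=f(x)\,h(y,z)\qquad\text{for all }x,y,z\in K,
\]
with $h(x,y):=g(x*y)-g(x)g(y)$ symmetric by commutativity. Fixing $z_0$ with $f(z_0)\ne 0$ and setting $k(y):=h(y,z_0)/f(z_0)$ gives $h(x,y)=f(x)k(y)$, and the symmetry of $h$ forces $f(x)k(y)=f(y)k(x)$, so $f$ and $k$ are proportional.

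Two sub-cases finish the proof. If $k\equiv 0$, then $h\equiv 0$, so $g$ is a continuous exponential and \eqref{sine} becomes exactly the defining relation of a $g$-sine function, giving case \textit{i)}. Otherwise $k=\lambda f$ for some $\lambda\ne 0$, so the cosine identity rewrites as $g(x*y)=g(x)g(y)+\lambda f(x)f(y)$. Choosing $\mu\in\C$ with $\mu^2=-\lambda$, the pair $(\tilde f,g)$ with $\tilde f:=\mu f$ then satisfies both \eqref{sine} and \eqref{cosine}; a short direct computation shows that $M:=g+i\tilde f$ and $N:=g-i\tilde f$ are continuous exponentials, so $g=(M+N)/2$ and $f=(M-N)/(2c)$ with $c:=i\mu$, which is case \textit{iii)}. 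The converse is a routine substitution check in each family. The main technical obstacle I expect is the associativity step: $f$ on $(x*y)*z$ must be unfolded via integration against the convolution measure, so one must verify that \eqref{sine} interacts correctly with this integration before equating with $f(x*(y*z))$.
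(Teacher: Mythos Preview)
Your proof is correct and follows essentially the same route as the paper: both reduce to the key identity $f(z)\bigl[g(x*y)-g(x)g(y)\bigr]=f(x)\bigl[g(y*z)-g(y)g(z)\bigr]$, deduce $g(x*y)=g(x)g(y)+\lambda f(x)f(y)$, and then diagonalize via $M=g+i\mu f$, $N=g-i\mu f$. The only cosmetic differences are that the paper obtains the key identity through the cocycle relation for the Cauchy difference $F(x,y)=f(x*y)-f(x)-f(y)$ rather than by direct expansion of $f((x*y)*z)=f(x*(y*z))$, and it organizes the preliminary case split as ``$g$ exponential vs.\ not, then $g(o)=1$ vs.\ not'' rather than your cleaner dichotomy on $f(o)$.
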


\begin{proof}
As the case $i)$ obviously describes a possible solution, hence we will suppose that $g$ is not an exponential.
\vskip.3cm

Suppose first that $g(o)\ne 1$. By substitution $y=o$ into \eqref{1} we get
$$
f(x) (1-g(o))=f(o) g(x),
$$
that is $f(x)= \frac{1}{c} g(x)$ with some complex number $c\ne 0$. It follows from \eqref{1}
$$
2 g(x*y)=2 g(x) 2 g(y)
$$
hence $g=\frac{1}{2} m$ and $f=\frac{1}{2c} m$, where $m$ is an exponential, which is given in $ii)$ with $M=m$.
\vskip.3cm

Now we assume $g(o)=1$. By substitution $y=o$ into \eqref{1} we get
$$
f(x)=f(x) g(o)+f(o) g(x)=f(x)+f(o) g(x)
$$
which implies $f(o) g(x)=0$, hence $f(o)=0$.
\vskip.3cm

We introduce the Cauchy difference: for each $x,y$ in $K$ we define
$$
F(x,y)=f(x*y)-f(x)-f(y)
$$
which can be written as
$$
F(x,y)=f(x) [g(y)-1]+f(y)[g(x)-1].
$$
Obviously, $F$ satisfies
$$
F(x,y)+F(x*y,z)=F(x,y*z)+F(y,z)
$$
for each $x,y,z$ in $K$, by the associativity of the hypergroup operation. After substitution and simplification we get the equation
$$
f(z)[g(x*y)-g(x) g(y)]=f(x)[g(y*z)-g(y) g(z)].
$$
for each $x,y,z$ in $K$. As $f\ne 0$, we have 
$$
g(x*y)=g(x) g(y)+f(x) \varphi(y)
$$
with some continuous $\varphi:K\to\C$. By commutativity, we obtain $\varphi=\lambda f$ with some complex number $\lambda$. We write $\lambda=-d^2$ and we infer
$$
g(x*y)=g(x) g(y)-d f(x) d f(y),
$$
or, with the notation $h= d f$ we obtain \eqref{cosine} for $g$ and $h$. Here $d\ne 0$, as otherwise $g$ is an exponential and we have $i)$.
\vskip.3cm

Then we multiply \eqref{sine} by $d$ and we have the system for the pair $g,h$:
\begin{equation}\label{sinecosine}
h(x*y)=h(x) g(y)+h(y) g(x)
\end{equation}
$$
g(x*y)=g(x) g(y)- h(x) h(y)
$$
for each $x,y$ in $K$. Let for each $x$ in $K$:
$$
M(x)=g(x)+i h(x),\hskip.5cm\text{and}\hskip.5cm N(x)=g(x)-i h(x).
$$
We have for each $x,y$ in $K$
$$
M(x*y)=g(x*y)+i h(x*y)=g(x) g(y)-h(x) h(y)+ i g(x) h(y)+i g(y) h(x)=
$$
$$
(g(x)+i h(x)) (g(y)+i h(y))=M(x) M(y),
$$
and 
$$
N(x*y)=g(x*y)-i h(x*y)= g(x) g(y) -h(x) h(y)- i g(x) h(y)- i g(y) h(x)=
$$
$$
(g(x)- i h(x)) (g(y)- i h(y))=N(x) N(y).
$$
This means that $M,N:K\to\C$ are exponentials. On the other hand, we have
$$
g=\frac{1}{2} (M+N),\hskip.5cm h=\frac{1}{2i} (M-N).
$$
It follows $f=\frac{1}{2di} (M-N)$, and we have $iii)$ with $c=di$.
\vskip.3cm

The converse statement can be verified easily by direct computation.
\end{proof}

\section{Cosine-sine functional equations on \\hypergroups}
\hskip.5cm
In this section we describe the nonzero solutions of the cosine-sine functional equation \eqref{cosine} on commutative hypergroups.

\begin{theorem}\label{2}
Let $K$ be a commutative hypergroup, and let $f,g:K\to\C$ be non-identically zero continuous functions satisfying \eqref{cosine} for each $x,y$ in $K$. Then there exist complex numbers $c\ne0, 1$ and $d\ne 0$, and there are continuous exponentials $M,N:K\to\C$ such that we have one of the following possibilities:
\begin{enumerate}[i)]
\item 
\begin{equation}\label{cosine1}
f(x)=\frac{c}{1-c^2}\, M(x),\hskip1cm g(x)=\frac{1}{1-c^2}\, M(x)
\end{equation}
for each $x$ in $K$.
\item 
\begin{equation}\label{cosine2}
f(x)=\frac{1}{2c}\, M(x),\hskip1cm g(x)=\frac{1}{2}\, M(x)
\end{equation}
for each $x$ in $K$. 
\item
$f$ is an $M$-sine function, and $g(x)=M(x)\pm f(x)$ for each $x$ in $K$.
\item 
\begin{equation}\label{cosine4}
f(x)=\pm\frac{1}{2d i} [M(x)-N(x)],\hskip.2cm g(x)=\pm \frac{\pm d i-\lambda}{2d i}M(x)\pm \frac{\pm d i+\lambda}{2d i}N(x)
\end{equation}
for each $x$ in $K$, where $d^2=1-\lambda^2$, and we choose $+$ or $-$ at each place in the same way. 
\end{enumerate}

Conversely, the functions $f,g$ given above are continuous solutions of equation \eqref{cosine} for any nonzero complex numbers $c,d$, $c\ne \pm1$ and continuous complex exponentials $M,N$.
\end{theorem}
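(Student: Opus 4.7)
The plan is to mimic the structure of the proof of Theorem~\ref{1}: first use $y=o$ to pin down $f,g$ at the identity or force $g(o)=1$, then exploit associativity to derive a sine-type equation for $f$, and finally diagonalize the system by looking for linear combinations $g+\gamma f$ that are exponentials. Substituting $y=o$ in \eqref{cosine} and using $x*o=x$ gives $g(x)(1-g(o))+f(x)f(o)=0$. If $g(o)\neq 1$, necessarily $f(o)\neq 0$ (else $g\equiv 0$), so $f=cg$ with $c=(g(o)-1)/f(o)\neq 0$; plugging back yields $g(x*y)=(1-c^2)g(x)g(y)$, and $g\not\equiv 0$ forces $c^2\neq 1$. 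Setting $M=(1-c^2)g$ produces an exponential and gives case~$i)$; case~$ii)$ is the reparametrization of case~$i)$ corresponding to $c^2=-1$. From now on I assume $g(o)=1$, hence $f(o)=0$.

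The associativity relation $g((x*y)*z)=g(x*(y*z))$, expanded via \eqref{cosine} and simplified, yields
\[
f(x)\bigl[f(y*z)-f(y)g(z)\bigr]=f(z)\bigl[f(x*y)-g(x)f(y)\bigr].
\]
Fixing $x_0$ with $f(x_0)\neq 0$ and setting $x=x_0$ gives $f(y*z)=f(y)g(z)+f(z)\psi(y)$, where $\psi(y)=[f(x_0*y)-g(x_0)f(y)]/f(x_0)$. Interchanging $y$ and $z$ and using commutativity of $*$ leads to $f(y)[g(z)-\psi(z)]=f(z)[g(y)-\psi(y)]$, so either $g\equiv\psi$, or $\psi=g-\beta f$ for some $\beta\in\C$. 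In both cases $f$ obeys
\[
f(x*y)=f(x)g(y)+f(y)g(x)-\beta f(x)f(y),
\]
with $\beta=0$ in the first alternative.

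To extract exponentials I look for $\gamma\in\C$ such that $h=g+\gamma f$ is an exponential. Plugging into $h(x*y)=h(x)h(y)$ and using the two displayed identities reduces this condition to the quadratic $\gamma^2+\beta\gamma+1=0$. If the discriminant $\beta^2-4$ vanishes (i.e., $\beta=\pm 2$), the unique root $\gamma=\mp 1$ yields the exponential $M=g\mp f$, and substituting $g=M\pm f$ back into the sine-type equation shows $f$ is an $M$-sine function with $g=M\pm f$, which is case~$iii)$. Otherwise, setting $\lambda=\beta/2$ and $d^2=1-\lambda^2\neq 0$, the two distinct roots $\gamma_{1,2}=-\lambda\pm id$ produce exponentials $M=g+\gamma_1 f$ and $N=g+\gamma_2 f$; inverting this $2\times 2$ linear system yields $f$ and $g$ in the form \eqref{cosine4}, the $\pm$ choices there reflecting the selection of the square root of $-d^2$ and the labelling of $M,N$. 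The converse direction is a routine substitution. The main obstacle I anticipate is the careful sign and root bookkeeping in case~$iv)$, needed to match the explicit form \eqref{cosine4} and to ensure that no admissible quadruple $(\lambda,d,M,N)$ is lost or double counted.
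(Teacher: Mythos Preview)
Your proposal is correct and follows essentially the same strategy as the paper: substitute $y=o$ to dispose of the case $g(o)\neq 1$, use associativity of the convolution to derive the sine-type identity $f(x*y)=f(x)g(y)+f(y)g(x)-\beta f(x)f(y)$, and then build exponentials as suitable linear combinations $g+\gamma f$. The only difference is organizational: the paper first sets $h=g+\lambda f$ (your $\lambda=\beta/2$), checks that the pair $(f,h)$ satisfies the sine--cosine system, and then invokes Theorem~\ref{1} to produce $M,N$, whereas you bypass Theorem~\ref{1} by solving the characteristic quadratic $\gamma^2+\beta\gamma+1=0$ directly---a slightly more self-contained but equivalent route.
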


\begin{proof}
First we note that here $g$ is not an exponential, otherwise $f$ is identically zero.
Substituting $y=o$ we have $g(x)(1-g(o))=-f(x)f(o)$, hence if $g(o)\ne 1$, then $g(x)=\frac{1}{c} f(x)$ with some complex number $c\ne 0$. We also have $c\ne \pm 1$ otherwise $g=\pm f$ and substitution into \eqref{cosine} gives that $f=g=0$. It follows from \eqref{cosine}
$$
\frac{1-c^2}{c} f(x*y)=\frac{1-c^2}{c} f(x) \frac{1-c^2}{c} f(y)
$$
implying $f(x)=\frac{c}{1-c^2}\, m(x)$ and $g(x)=\frac{1}{1-c^2}\, m(x)$ with some exponential $m$, which is $i)$ with $M=m$.
\vskip.3cm

Now we assume $g(o)=1$, and in this case \eqref{cosine} implies $f(o)=0$. We define a modified Cauchy difference $G(x,y)=g(x*y)-g(x) g(y)$ such that we have
$$
g(z) G(x,y)+G(x*y,z)=G(x,y*z)+g(x) G(y,z)
$$
for each $x,y,z$ in $K$.
Then, by \eqref{cosine}, it follows
$$
f(x)[f(y*z)-f(y)g(z)]=f(z)[f(x*y)-g(x)f(y)].
$$
As $f\ne 0$ this implies
$$
f(x*y)=f(x) \varphi(y)+f(y) g(x)
$$
with some continuous $\varphi:K\to\C$. Interchanging $x$ and $y$ we have the relation
$$
\varphi(x)=g(x)+2\lambda  f(x)
$$
with some complex number $\lambda$. If $\lambda=0$, then we have $\varphi=g$, and the pair $f,g$ satisfies the sine equation \eqref{sine}. Case $i)$ in Theorem \ref{1} cannot occur. Case $ii)$ in Theorem \ref{1} gives $c=\pm i$, which is included in $i)$ above with $c=\pm i$. Finally, case $iii)$ in Theorem \ref{1} gives $c=\pm i$ which is included in case $iii)$ above with $c=\pm i$.
\vskip.3cm

Now we assume $\lambda\ne 0$, then we have
\begin{equation}\label{tricky}
f(x*y)=f(x)g(y)+2\lambda f(x) f(y)+f(y) g(x).
\end{equation}
We introduce the function
$$
h(x)=g(x)+\lambda f(x),
$$
then a simple calculation shows that
\begin{equation}\label{fsin}
f(x*y)=f(x) h(y)+f(y) h(x)
\end{equation}
and
\begin{equation}\label{hexp}
h(x*y)=h(x) h(y)-(1-\lambda^2) f(x) f(y).
\end{equation}

Equation \eqref{fsin} shows that $f$ and $h$ satisfy the sine-cosine equation \eqref{sine}, hence we have the description of the solutions, we just have to extract the solutions of \eqref{cosine}. But we also have to consider equation \eqref{hexp} which depends on $\lambda$. If $\lambda^2=1$, then $h=m$ is an exponential and $f$ is an $m$-sine function. In this case we have $g=m\pm f$ and substitution into \eqref{cosine} gives that this is a solution indeed, which is covered by case $iii)$ above.
\vskip.3cm

Finally we suppose $\lambda^2\ne 1$. We take $d\ne 0$ with $d^2=1-\lambda^2$, then we have by \eqref{hexp}
$$
h(x*y)=h(x) h(y) - d f(x) d f(y)
$$
and, multiplying \eqref{fsin} by $d$ gives
$$
d f(x*y)=d f(x) h(y) +d f(y) h(x)
$$
for each $x,y$ in $K$. This means that the pair $d f,h$ satisfies the sine-cosine and the cosine-sine functional equations simultaneously, and in this case $h$ is not an exponential, hence we have to consider cases $ii)$ and $iii)$ only, in Theorem \ref{1}. In case $ii)$ we get $c=\pm i$ and, by the definition of $h$
$$
f(x)=\pm \frac{1}{2di}M(x),\hskip1cm g(x)+\lambda f(x)=\frac{1}{2} M(x)
$$
which implies that $f$ and $g$ are constant multiples of each other, hence we have case $i)$ above. In case $iii)$ of Theorem \ref{1} we obtain
$$
f(x)=\frac{1}{2cd} [M(x)-N(x)],\hskip1cm g(x)=\frac{cd-\lambda}{2cd}M(x)+ \frac{cd+\lambda}{2cd}N(x)\,.
$$
Substitution into \eqref{cosine} gives $c=\pm i$ and we have
$$
f(x)=\pm\frac{1}{2d i} [M(x)-N(x)],\hskip1cm g(x)=\pm \frac{\pm d i-\lambda}{2d i}M(x)\pm \frac{\pm d i+\lambda}{2d i}N(x)\,,
$$
where $d^2=1-\lambda^2$, and we choose $+$ or $-$ at each place in the same way, as it is given in case $iv)$ above.
\vskip.3cm

The converse statement can be verified easily by direct computation.
\end{proof}

Using results concerning the form of exponentials on some particular hypergroups discussed in \cite{Sze12} one can obtain explicit forms of sine  functions on certain hypergroups.

\end{document}